\newtheorem{theorem}{{\sc Theorem}}[section]
\newtheorem{lemma}[theorem]{{\sc Lemma}}
\theoremstyle{remark}
\theoremstyle{definition}
\newtheorem{example}[theorem]{\sc example}
\newcommand{\R}{\mathbb{R} }
\newcommand{\N}{\mathbb{N} }
\newcommand{\B}{\mathcal{B}}
\newcommand{\F}{\mathcal{F}}
\newcommand{\Prob}{\mathbb{P}}
\newcommand{\Pot}{\mathcal{P}}
\newcommand{\Om}{\Omega}
\newcommand{\om}{\omega}
\newcommand{\void}{\emptyset}
\renewcommand{\phi}{\varphi}
\renewcommand{\epsilon}{\varepsilon}
\renewcommand{\rho}{\varrho}
\renewcommand{\P}{\Prob}
\begin{document}
\title[]{Discrete probability spaces revisited}
\author{Christian D\"obler}
\thanks{\noindent Mathematisches Institut der Heinrich Heine Universit\"{a}t D\"usseldorf\\
Email: christian.doebler@hhu.de\\
{\it Keywords: Discrete probability spaces, discrete random variables, extensions of probability measures, countable partitions of $\sigma$-fields} }
\begin{abstract}  
We give an elementary proof of the known fact that every probability measure, defined on an arbitrary $\sigma$-field on a countable sample space $\Omega$, may in fact be extended to a probability measure on the power set of $\Om$. This result is further discussed and motivated in the context of discrete random variables.    
\end{abstract}

\maketitle

\section{Introduction}\label{intro}
In axiomatic probability theory, a probability space $(\Om,\F,\P)$ is customarily called \textit{discrete}, if 
\begin{enumerate}[(a)]
 \item the sample space $\Om$ is countable, i.e. either finite or countably infinite, and 
 \item $\F=\Pot(\Om)$ is the power set of $\Om$.
\end{enumerate}
 In this case, the function $p:\Om\rightarrow\R$ defined by $p(\om):=\P(\{\om\})$, $\om\in\Om$, is a \textit{probability mass function} (p.m.f.) on $\Om$, i.e.
\begin{enumerate}[(i)]
\item $p(\om)\geq0$ for all $\om\in\Om$ and 
\item $\sum_{\om\in\Om} p(\om)=1$.
\end{enumerate}
Conversely, if $\Om$ is a countable set and $p:\Om\rightarrow\R$ satisfies (i) and (ii) above, then the set function $\P:\Pot(\Om)\rightarrow\R$ with 
\begin{equation}\label{probabs}
 \P(A):=\sum_{\om\in A} p(\om),\quad A\subseteq\Om,
\end{equation}
is a probability measure on the measurable space $(\Om,\Pot(\Om))$, thus making it into a discrete probability space. 
This yields the well-known one-to-one correspondence between discrete probability spaces $(\Om,\Pot(\Om),\P)$ and pairs $(\Om,p)$ consisting of a countable set $\Om$ and a p.m.f. $p$ on $\Om$.

In particular, a probability space $(\Om,\F,\P)$ with countable sample space $\Omega$ only falls under the above definition of a discrete probability space, if $\F$ happens to be the power set $\Pot(\Om)$ of $\Om$. Hence, one might wonder, if each such probability measure on $\F\subsetneq\Pot(\Om)$ may always be extended to a probability measure $\P^*$ on $\Pot(\Om)$. In view of the above one-to-one correspondence, this is equivalent to asking, whether there is always a p.m.f. $p:\Om\rightarrow\R$ such that \eqref{probabs} holds for each $A\in\F$.\\

Such a question in particular arises in the context of \textit{discrete random variables}. Recall that, if $(\Om,\F,\P)$ is now an arbitrary probability space, that is $\Om$ is not necessarily assumed countable, and $(E,\B)$ is another measurable space, then a random variable $X:(\Om,\F)\rightarrow(E,\B)$ (i.e. an $\F-\B$-measurable mapping) is called \textit{discrete}, whenever there is a countable set $D\in\B$ such that $\P(X\in D)=1$. Note that discrete random variables are often most naturally defined on uncountable sample spaces $\Omega$. One may, for instance, think of an infinite sequence of independent, fair coin tosses and let $X$ be the first time $n\in\N$ the coin shows heads, if any. Then, the natural sample space $\Om=\{0,1\}^\N$ is uncountable and $X$ has the geometric distribution on $D=\N=\{1,2,\dotsc\}$ with parameter $1/2$ and, hence, is a discrete random variable.

Since, for a discrete random variable $X$, the distribution $\P_X$ of $X$ on $(E,\B)$ is concentrated on $D$ we may view it as a probability measure on $(D,\B|D)$, where 
\[\B|D:=\{B\cap D\,:\,B\in\B\}\]
is the trace of $\B$ on $D$. 
Hence, when exploring distributional properties of a discrete random variable, we may w.l.o.g. assume that $E$ itself is countable and $D=E$. In this case, according to what has been noticed above, the probability space $(E,\B,\P_X)$ induced by a discrete random variable is \textbf{not} necessarily discrete in the above sense, since the $\sigma$-field $\B$ might be strictly smaller than the power set $\Pot(E)$. This means that probabilities of the form 
\begin{equation}\label{probs}
 \P(X=x),\quad x\in E,
\end{equation}
may not be well-defined for $\{x\}\notin\B$.

It should however be mentioned that this issue does not arise in the situation of discrete random variables 
$X:(\Om,\F)\rightarrow(\R^d,\B(\R^d))$, where $\B(\R^d)$ denotes the \text{Borel $\sigma$-field} on $\R^d$, since the singeltons $\{x\}$, $x\in\R^d$, are contained in $\B(\R^d)$. Hence, if $D\in \B(\R^d)$ is the given countable set belonging to $X$ such that $\P(X\in D)=1$, then it follows that $\B(\R^d)|D=\Pot(D)$. In particular, 
$(D,\B(\R^d)|D,\P_X)$ is in fact a discrete probability space in this situation.\\

In a converse sense, even if the probabilites \eqref{probs} are all well-defined for a given random variable $X$ which is in fact defined on a countable sample space $\Om$, the probability measure $\P$ on the underlying probability space might stick lack some information, as is illustrated by the following example.

\begin{example}\label{finance}
Suppose that the countable set $\Omega$ describes all possible scenarios in a financial market and that $X(\omega)$ is the value of some stock at a fixed time, if the scenario $\omega$ has occurred. Then, we might well be aware of all probabilities \eqref{probs} for the possible values $x$ of the stock in the countable set $E\subseteq[0,\infty)$, implying in particular that $\B=\Pot(E)$, but the given $\sigma$-field $\F$ on $\Om$ might be as small as
\[\F=\sigma(X)=\bigl\{X^{-1}(B)\,:\,B\subseteq E\bigr\}.\]
This in particluar means that the probability that a particular scenario $\omega\in\Omega$ occurs, might not be computable. 

Observe however that in this situation, the $\sigma$-field $\F$ is generated by the countable, measurable partition (see below for a precise definition)
\[A_x:=X^{-1}\bigl(\{x\}\bigr),\quad x\in E\cap X(\Om),\]
which makes it easy to extend the probability $\P$ from $\F$ to $\Pot(\Om)$ (see the proof of Theorem \ref{maintheo} below in Section \ref{main}). Thus, it is actually possible to assign probabilites 
\[p(\om)=\P\bigl(\{\omega\}\bigr)\]
to all possible market scenarios in such a way that 
\[\sum_{\om\in A_x}p(\om)=\P(X=x)\]
for all $x\in E$. Moreover, the sizes of the pairwise disjoint sets $A_x$, $x\in X$, determine the number of degrees of freedom for such a choice.
\end{example}

It is the goal of this note to give a simple proof of the fact that, for a countable sample space $\Om$, it is always possible to extend a probability measure $\P$ on any $\sigma$-field $\F$ on $\Omega$ to a probability measure $\P^*$ on $\Pot(\Om)$ as in the previous example. 

\begin{theorem}\label{maintheo}
Let $(\Om,\F,\P)$ be a probability space such that $\Om$ is countable. Then, there is an extension $\P^*$ of $\P$ to $\Pot(\Om)$. In other words, there is a p.m.f. $p:\Om\rightarrow\R$ such that 
$\P(A)=\sum_{\om\in A}p(\om)$ for all $A\in\F$.
\end{theorem}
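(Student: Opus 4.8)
The plan is to analyze the structure of an arbitrary $\sigma$-field $\F$ on a countable set $\Om$ and show it is always generated by a countable measurable partition, then push the probability mass down onto that partition. The key structural observation is this: for each $\om\in\Om$, let
\[
A_\om:=\bigcap_{F\in\F,\ \om\in F} F
\]
be the \emph{atom} of $\F$ containing $\om$. Since $\Om$ is countable, $\F$ is countable, so this is a countable intersection of members of $\F$ and hence $A_\om\in\F$. First I would verify the elementary facts that two atoms $A_\om$, $A_{\om'}$ are either equal or disjoint, that each $A_\om$ is \emph{non-splittable} in $\F$ (i.e.\ if $F\in\F$ then either $A_\om\subseteq F$ or $A_\om\cap F=\void$), and that the collection $\Pot:=\{A_\om:\om\in\Om\}$ is therefore a countable partition of $\Om$ into $\F$-measurable sets. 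From non-splittability one gets that every $F\in\F$ is a union of atoms, namely $F=\bigcup_{\om\in F}A_\om$, so in fact $\F=\sigma(\Pot)$ and $\F$ consists \emph{exactly} of arbitrary unions of atoms.

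With this in hand the construction of $p$ is straightforward. Enumerate the (at most countably many) distinct atoms as $A^{(1)},A^{(2)},\dotsc$, and for each atom $A^{(k)}$ pick one representative point $\om_k\in A^{(k)}$. Define $p:\Om\to\R$ by $p(\om_k):=\P\bigl(A^{(k)}\bigr)$ and $p(\om):=0$ for every $\om$ that is not one of the chosen representatives. Then $p\ge0$, and $\sum_{\om\in\Om}p(\om)=\sum_k \P(A^{(k)})=\P\bigl(\bigcup_k A^{(k)}\bigr)=\P(\Om)=1$ by countable additivity and the fact that the atoms partition $\Om$; so $p$ is a p.m.f. For any $A\in\F$ write $A=\bigcup_{k\in S}A^{(k)}$ for the appropriate index set $S$ (possible by the structural result above); then
\[
\sum_{\om\in A}p(\om)=\sum_{k\in S}p(\om_k)=\sum_{k\in S}\P\bigl(A^{(k)}\bigr)=\P\Bigl(\bigcup_{k\in S}A^{(k)}\Bigr)=\P(A),
\]
again using that $p$ vanishes off the representatives and that $\P$ is countably additive on the disjoint union. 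Defining $\P^*$ on $\Pot(\Om)$ via $\P^*(B):=\sum_{\om\in B}p(\om)$ then gives the desired extension.

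The only real point requiring care — the ``main obstacle,'' such as it is — is justifying that $A_\om\in\F$ and that every $F\in\F$ decomposes as a union of atoms; both rest entirely on the countability of $\Om$ (hence of $\F$), which is what makes the otherwise-uncountable intersection defining $A_\om$ legitimate. Once the partition $\{A^{(k)}\}$ is available, everything else is a routine application of countable additivity of $\P$. I would also remark that the freedom in distributing the mass $\P(A^{(k)})$ among the points of $A^{(k)}$ (rather than dumping it all on a single representative) accounts for the ``degrees of freedom'' mentioned in Example \ref{finance}: any nonnegative $p$ supported on $\Om$ with $\sum_{\om\in A^{(k)}}p(\om)=\P(A^{(k)})$ for every $k$ works equally well.
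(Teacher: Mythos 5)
There is a genuine gap, and it is precisely at the point the paper itself flags as the delicate one. You justify $A_\om\in\F$ by asserting that ``since $\Om$ is countable, $\F$ is countable, so this is a countable intersection.'' That implication is false: a $\sigma$-field on a countably infinite $\Om$ is never countably infinite --- it is either finite or uncountable (for instance $\F=\Pot(\N)$ is uncountable). So the intersection $\bigcap_{F\in\F,\,\om\in F}F$ defining your atom may run over uncountably many sets, and closure of $\F$ under countable intersections does not by itself put $A_\om$ in $\F$. Since your entire structural claim (atoms are measurable, every $F\in\F$ is a union of atoms, $\F=\sigma$ of the atom partition) rests on $A_\om\in\F$, the argument as written does not go through.

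The conclusion $A_\om\in\F$ is nevertheless true, and the paper's repair is worth internalizing: for each pair $(\om,\eta)\in\Om\times\Om$ choose a separating set $D_{\om,\eta}\in\F$ with $\om\in D_{\om,\eta}$ and $\eta\notin D_{\om,\eta}$ if one exists, and set $D_{\om,\eta}:=\Om$ otherwise; one then checks that $A_\om=\bigcap_{\eta\in\Om}D_{\om,\eta}$, which \emph{is} a countable intersection because it is indexed by the countable set $\Om$ (not by $\F$). With that patch, the rest of your proof is correct: the atoms are pairwise equal or disjoint, every $F\in\F$ is the (countable) union of the atoms it contains, and any nonnegative $p$ with $\sum_{\om\in A^{(k)}}p(\om)=\P(A^{(k)})$ for every $k$ works --- your choice of dumping all mass on one representative per atom is a legitimate special case of the paper's construction, which instead spreads the mass uniformly or geometrically within each atom. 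Your closing remark about the degrees of freedom matches the paper's concluding observation.
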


Theorem \ref{maintheo} is actually a direct corollary of a classical result of Bierlein, see \cite[Satz 2B]{Bierlein} (in German), \cite[Corollary 2]{AL} or \cite[Theorem 1.12.15]{Bogachev}. The proofs in these references are however quite demanding and require advanced knowledge of measure theory, whereas our direct proof of Theorem \ref{maintheo} only makes use of basic measure theoretic facts. This makes it possible to include it into an introductory course of probability. 

\section{Proof of Theorem \ref{maintheo}}\label{main}
The proof of Theorem \ref{maintheo} relies on the fact that, as in the example above, every $\sigma$-field on a countable set $\Om$ is generated by a countable measurable partition of $\Om$. We begin by properly defining these notions.

If $(\Om,\F)$ is a measurable space, an \textit{$\F$-measurable partition} of $\Om$ is a collection $\{B_i\,:\, i\in I\}\subseteq \F\setminus\{\void\}$ such that $\Om=\bigcup_{i\in I} B_i$ and $B_i\cap B_j=\void$ for all $i,j\in I$ such that $i\not=j$. Such a collection is called \textit{finite} or \textit{countable}, if the index set $I$ is finite or countable, respectively. If $\{B_i\,:\, i\in I\}$ is an $\F$-measurable partition of $\Om$, then every $A\in\F$ can be (uniquely) written in the form 
\[A=\bigcup_{\substack{i\in I:\\ B_i\subseteq A}} B_i.\] 
In particular, if $I$ is countable, then all such unions are again contained in the $\sigma$-field $\F$, so that $\F$ is in fact generated by the partition $\{B_i\,:\, i\in I\}$, in this case. \\

The following known result is fundamental to our proof. Since I did not manage to find a suitable reference for it, a complete proof is included.
\begin{lemma}\label{partition}
If $(\Om,\F)$ is a measurable space such that $\Om$ is countable, then 
there is a countable $\F$-measurable partition $\{B_i\,:\, i\in I\}$ of $\Om$. 
If $\F$ is finite, then so is the partition.
\end{lemma}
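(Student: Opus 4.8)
The plan is to construct the partition explicitly from the equivalence relation on $\Om$ induced by the $\sigma$-field $\F$. For each $\om\in\Om$, define the \emph{atom} $B_\om$ containing $\om$ as the intersection of all measurable sets containing $\om$, i.e.
\[
B_\om:=\bigcap_{\substack{A\in\F:\\ \om\in A}} A.
\]
The crucial point is that, because $\Om$ is countable, $\F$ is a \emph{countable} collection of sets (it injects into $\Pot(\Om)$, which is countable when $\Om$ is finite, but one must be a little careful: $\Pot(\Om)$ is only countable when $\Om$ is finite!). So I need to be more careful here — see the next paragraph.

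First I would observe that for a fixed $\om$, the family $\{A\in\F:\om\in A\}$ need not be countable, but the collection of sets it cuts out on the countable set $\Om$ still allows the intersection $B_\om$ to be taken over a countable subfamily: enumerate $\Om=\{\om_1,\om_2,\dotsc\}$, and for each $n$ with $\om_n\notin B_\om$ pick $A_n\in\F$ with $\om\in A_n$ and $\om_n\notin A_n$; then $B_\om=\bigcap_n A_n'$ where $A_n'$ is either $A_n$ or $\Om$, a countable intersection, hence $B_\om\in\F$. Next I would verify that $B_\om$ is a measurable atom: if $A\in\F$ then either $\om\in A$, forcing $B_\om\subseteq A$, or $\om\notin A$, forcing $B_\om\subseteq\Om\setminus A$, i.e. $B_\om\cap A=\void$. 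In particular, for $\om,\om'\in\Om$, the atoms $B_\om$ and $B_{\om'}$ are either equal or disjoint (if they intersect, say in $\eta$, then $B_\om\subseteq A\iff\eta\in A\iff B_{\om'}\subseteq A$ for all $A\in\F$, whence $B_\om=B_{\om'}$). Since $\om\in B_\om$ for every $\om$, the distinct atoms form an $\F$-measurable partition $\{B_i:i\in I\}$ of $\Om$, and it is countable because there are at most $|\Om|$ distinct atoms. Finally, if $\F$ is finite, then for each $\om$ the intersection defining $B_\om$ is a finite intersection of members of the finite collection $\F$, and the partition itself, being a subfamily of $\F$, is finite.

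The main obstacle — and the only genuinely subtle point — is the cardinality issue: one must resist the temptation to claim $\F$ is countable (it is, when $\Om$ is countably infinite, since any $\sigma$-field generated by a countable partition is countable, but that is precisely what we are trying to prove, so invoking it would be circular). The clean way around this is the argument in the previous paragraph: work with the countably many points $\om_n$ that need to be separated from $\om$, rather than with the possibly uncountable family of sets in $\F$ containing $\om$. Everything else is a routine verification that the ``atom'' construction yields a partition and that the partition lies inside $\F$.
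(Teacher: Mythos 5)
Your proof is correct and takes essentially the same route as the paper: you define the atoms $B_\om$ as intersections of all measurable sets containing $\om$, establish $B_\om\in\F$ by replacing the possibly uncountable family with a countable family of separating sets indexed by the points of $\Om$ (exactly the paper's $D_{\om,\eta}$ device), show the atoms are pairwise equal or disjoint, and handle the finite case. One stray parenthetical remark is false --- the $\sigma$-field generated by a countably infinite partition is \emph{uncountable}, not countable, as it is in bijection with the power set of the index set --- but since your actual argument never uses any countability of $\F$, this does not affect the proof.
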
 

\begin{proof}
The idea of the proof is to take the minimal (with respect to ``$\subseteq$'') non-empty measurable sets in $\F$ as the members of the sought partition. Since $\Om$ might be countably infinite, some care is needed in order to properly identify these. 

For $\om\in\Om$ we define 
\[C_\om:=\bigcap_{\substack{A\in \F:\\ \om\in A}} A.\]
If $\F$ is not finite, then, as is well-known, it is uncountable so it is not a priori clear that the $C_\om\in\F$. To see that this is in fact the case, observe first that $C_\om$ consists precisely of those points $\eta\in\Om$ such that for all $A\in\F$:
\[\om\in A\Rightarrow\eta\in A.\]
Now, for all pairs $(\om,\eta)\in\Om\times\Om$, let $D_{\om,\eta}\in\F$ be such that $\om\in D_{\om,\eta}$ but $\eta\notin D_{\om,\eta}$, if any. If there is no such set $D_{\om,\eta}\in\F$, then let 
 $D_{\om,\eta}:=\Om$ so that in particular $D_{\om,\om}=\Om$ for all $\om\in\Om$. We claim that 
\begin{equation}\label{part1}
C_\om=\bigcap_{\eta\in\Om} D_{\om,\eta},\quad\om\in\Om.
\end{equation}
To see this, fix $\om\in\Om$ and denote the right hand side in $\eqref{part1}$ by $C_\om'$. Since $\Om$ is countable and all $D_{\om,\eta}\in\F$, we have $C_\om'\in\F$. Moreover, as $\om\in C_\om'\in\F$ it is clear that $C_\om\subseteq C_\om'$ by the definition of $C_\om$. Conversely, suppose that 
$\gamma\in C_\om'$. 
Then, in particular, $\gamma\in D_{\om,\gamma}$ so that $D_{\om,\gamma}=\Om$ and, 
thus, every $A\in\F$ that contains $\om$ must also contain $\gamma$. Thus, $\gamma\in C_\om$ and \eqref{part1} is proved. 
Since the intersection in \eqref{part1} is countable, it in particular follows that $C_\om\in\F$ for all $\om\in\Om$.

Next, we claim that two such sets $C_\om$ and $C_{\om'}$, $\om,\om'\in\Om$, are either identical or disjoint. Suppose, for instance, that there is an $\eta\in C_{\om}\setminus C_{\om'}$. Then, each $A\in \F$ that contains $\om$ also contains $\eta$ but there is a $B\in\F$ such that $\om'\in B$ and $\eta\notin B$. Hence, $B$ cannot contain $\om$ either and, a fortiori, $\om\in C_{\om'}^c\in\F$. Thus, $C_{\om'}^c$ appears in the intersection defining $C_\om$, implying that $C_\om\cap C_{\om'}=\emptyset$. 

Now, if $I$ is a (necessarily countable) index set and $B_i$, $i\in I$, is a suitable enumeration of the pairwise distinct ones among the sets $C_\om$, $\om\in \Om$, then the desired partition is found. Indeed, if $A\in\F$, then 
\[A=\bigcup_{\om\in A} C_\om.\]
 If $\F$ is finite, then $I$ must necessarily be finite as well, since the mapping 
\[\Pot(I)\ni J\mapsto\bigcup_{i\in J} B_i\in\F\]
is injective, as the $B_i$ are disjoint and non-empty.
\end{proof}

\begin{proof}[Proof of Theorem \ref{maintheo}]
By Lemma \ref{partition} there is a countable $\F$-measurable partition $\{B_i\,:\, i\in I\}$ generating $\F$. Fix $i\in I$. If $B_i$ is finite, let 
\[p(\om):=\frac{\P(B_i)}{|B_i|},\quad \om\in B_i.\]
If $B_i$ is countably infinite, then let $\phi_i:B_i\rightarrow\N$ be any bijection and define
\[p(\om):=\frac{\P(B_i)}{2^{\phi_i(\om)}},\quad \om\in B_i.\]
Then, $p(\om)\geq0$ for all $\om\in\Om=\bigcup_{i\in I}B_i$ and 
\[\sum_{\om\in B_i}p(\om)=\P(B_i)\]
for each $i\in I$. Moreover, each $A\in \F$ can be written as the disjoint union
\[A=\bigcup_{\substack{i\in I:\\ B_i\subseteq A}} B_i\] 
so that
\begin{align*}
\sum_{\om\in A} p(\om)&=\sum_{\substack{i\in I:\\ B_i\subseteq A}}\Bigl(\sum_{\om\in B_i} p(\om)\Bigr)=\sum_{\substack{i\in I:\\ B_i\subseteq A}} \P(B_i)=\P(A).
\end{align*}
In particular, by letting $A=\Om$ in the last display, it follows that $p$ is a p.m.f., concluding the proof.
\end{proof}

By inspection of the above proof we can further infer the following about the number of possible extensions in the situation of Theorem \ref{maintheo}: If $\{B_i\,:\, i\in I\}$ is a fixed, countable $\F$-measurable partition generating $\F$, then there is a one-to-one correspondence $(q_i)_{i\in I}\mapsto p$ between families of p.m.f.'s $q_i$ on $B_i$, $i\in I$, and p.m.f.'s $p$ on 
$\Omega$ such that $\sum_{\om\in A}p(\om)=\P(A)$ for all $A\in\F$.

\normalem
\bibliography{disprob}{}

\begin{thebibliography}{Bog07}

\bibitem[AL77]{AL}
A.~Ascherl and J.~Lehn.
\newblock Two principles for extending probability measures.
\newblock {\em Manuscripta Math.}, 21(1):43--50, 1977.

\bibitem[Bie63]{Bierlein}
D.~Bierlein.
\newblock \"{U}ber die {F}ortsetzung von {W}ahrscheinlichkeitsfeldern.
\newblock {\em Z. Wahrscheinlichkeitstheorie und Verw. Gebiete}, 1:28--46,
  1962/63.

\bibitem[Bog07]{Bogachev}
V.~I. Bogachev.
\newblock {\em Measure theory. {V}ol. {I}, {II}}.
\newblock Springer-Verlag, Berlin, 2007.

\end{thebibliography}
\bibliographystyle{alpha}
\end{document}